\newtheorem{theorem}{Theorem}
\newtheorem{lemma}{Lemma}
\begin{document}
\setcounter{page}{1}

\begin{center}
{\LARGE \bf  On the distribution of consecutive square-free numbers of the form $\mathbf{[\alpha n], [\alpha n]+1}$}
\vspace{8mm}

{\large \bf S. I. Dimitrov}
\vspace{3mm}

Faculty of Applied Mathematics and Informatics, Technical University of Sofia \\
8, St.Kliment Ohridski Blvd. 1756 Sofia, BULGARIA \\
e-mail: \url{sdimitrov@tu-sofia.bg}
\vspace{2mm}
\end{center}
\vspace{10mm}

\noindent
{\bf Abstract:}
In the present paper we show that there exist infinitely many consecutive square-free numbers of the form
$[\alpha n]$, $[\alpha n]+1$, where $\alpha>1$ is irrational number with bounded partial quotient
or irrational algebraic number.\\
{\bf Keywords:} Consecutive square-free numbers, Asymptotic formula.\\
{\bf AMS Classification:} 11L05 $\cdot$ 11N25 $\cdot$  11N37.
\vspace{10mm}

\section{Notations}
\indent

Let $N$ be a sufficiently large positive integer. By $\varepsilon$ we denote an arbitrary small positive number,
not necessarily the same in different occurrences. We denote by $\mu(n)$ the M\"{o}bius function and by $\tau(n)$
the number of positive divisors of $n$. As usual $[t]$ and $\{t\}$ denote the integer part, respectively, the
fractional part of $t$. Let $||t||$ be the distance from $t$ to the nearest integer.
Instead of $m\equiv n\,\pmod {k}$ we write for simplicity $m\equiv n\,(k)$.
Moreover $e(t)$=exp($2\pi it$) and $\psi(t)=\{t\}-1/2$.
Let $\alpha>1$ be irrational number with bounded partial quotient or irrational algebraic number.

Denote
\begin{equation}\label{sigma}
\sigma=\prod\limits_{p}\left(1-\frac{2}{p^2}\right)\,.
\end{equation}
We define the characteristic function $\omega_\alpha(x)$ in the interval $(0, 1]$ as follows
\begin{equation}\label{omegaalphax}
\omega_\alpha(x)=\begin{cases}1\,,\;\; \mbox{ if }\, 1-\frac{1}{\alpha}<x<1\,;\\
\frac{1}{2}\,,\;\, \mbox{ if } x=1-\frac{1}{\alpha} \, \mbox{ or } \, x=1\,;\\
0\,,\;\; \mbox{  otherwise}\,;
 \end{cases}
\end{equation}
and we extend it periodically to all real line.
\newpage

\section{Introduction and statement of the result}
\indent

The problem for the consecutive square-free numbers arises in 1932 when Carlitz \cite{Carlitz} proved that
\begin{equation}\label{Carlitz}
\sum\limits_{n\leq N}\mu^2(n)\mu^2(n+1)=\sigma N+\mathcal{O}\big(N^{2/3+\varepsilon}\big)\,,
\end{equation}
where $\sigma$ is denoted by \eqref{sigma}.

Subsequently in 1949 Mirsky \cite{Mirsky} improved the error term of \eqref{Carlitz} to
\begin{equation}\label{Mirsky}
\sum\limits_{n\leq N}\mu^2(n)\mu^2(n+1)=\sigma N+\mathcal{O}\left(N^{2/3}(\log N)^{4/3}\right)\,.
\end{equation}
Further in 1984 Heath-Brown \cite{Heath-Brown} improved the error term of \eqref{Mirsky} to
\begin{equation}\label{Heath-Brown}
\sum\limits_{n\leq N}\mu^2(n)\mu^2(n+1)=\sigma N+\mathcal{O}\big(N^{7/11}(\log N)^7\big)\,.
\end{equation}
Finally in 2014 Reuss \cite{Reuss} improved the error term of \eqref{Heath-Brown} to
\begin{equation}\label{Reuss}
\sum\limits_{n\leq N}\mu^2(n)\mu^2(n+1)=\sigma N+\mathcal{O}\big(N^{(26+\sqrt{433})/81+\varepsilon}\big)
\end{equation}
and this is the best result up to now.

In 2008  G\"{u}lo\u{g}lu and  Nevans \cite{Guloglu} showed that there exist infinitely many square-free
numbers of the form $[\alpha n]$, where $\alpha>1$ is irrational number of finite type.
More precisely they proved that the asymptotic formula
\begin{equation*}
\sum\limits_{n\leq N}\mu^2([\alpha n])
=\frac{6}{\pi^2}N+\mathcal{O}\left(\frac{N\log\log N}{\log N}\right)
\end{equation*}
holds.

On the other hand in 2009 Abercrombie and Banks\cite{Abercrombie} showed
that for almost all $\alpha>1$ the asymptotic formula
\begin{equation*}
\sum\limits_{n\leq N}\mu^2([\alpha n])
=\frac{6}{\pi^2}N+\mathcal{O}\left(N^{\frac{2}{3}+\varepsilon}\right)
\end{equation*}
holds, however this result provides no specific value of $\alpha$.

Subsequently in 2013 Victorovich \cite{Victorovich} proved that when
$\alpha>1$ is irrational number with bounded partial quotient or irrational algebraic number,
then the asymptotic formula
\begin{equation*}
\sum\limits_{n\leq N}\mu^2([\alpha n])
=\frac{6}{\pi^2}N+\mathcal{O}\left(AN^{\frac{5}{6}}\log^5N\right)
\end{equation*}
holds. Here $A = A(N) = \max\limits_{1\leq m\leq N^2}\tau(m)$.

In 2018 the author \cite{Dimitrov1} showed that for any fixed $1<c<22/13$ there exist infinitely many
consecutive square-free numbers of the form $[n^c], [n^c]+1$.

Recently the author \cite{Dimitrov2} proved that there exist infinitely many consecutive square-free numbers of the form
$x^2+y^2+1$, $x^2+y^2+2$.

Define
\begin{equation}\label{SNalpha}
S(N, \alpha)=\sum\limits_{n\leq N}\mu^2([\alpha n]) \mu^2([\alpha n]+1)\,.
\end{equation}
Motivated by these results and following the method of Victorovich \cite{Victorovich}
we shall prove the following theorem.
\begin{theorem}
Let $\alpha>1$ be irrational number with bounded partial quotient or irrational algebraic number.
Then for the sum $S(N, \alpha)$  defined by \eqref{SNalpha} the asymptotic formula
\begin{equation}\label{asymptoticformula}
S(N, \alpha)=\sigma N+\mathcal{O}\left(N^{\frac{5}{6}+\varepsilon}\right)
\end{equation}
holds. Here $\sigma$ is defined by \eqref{sigma}.
\end{theorem}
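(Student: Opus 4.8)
The plan is to detect the Beatty condition $m=[\alpha n]$ by the characteristic function $\omega_\alpha$, thereby converting $S(N,\alpha)$ into a weighted count of consecutive square-free integers. Since $\alpha>1$ is irrational, distinct $n$ give distinct $m=[\alpha n]$, and an integer $m\ge 1$ is of the form $[\alpha n]$ precisely when $\{m/\alpha\}\in(1-1/\alpha,1)$, which is exactly the support of $\omega_\alpha$ on $(0,1]$. Hence, up to the harmless boundary values,
\begin{equation*}
S(N,\alpha)=\sum_{m\le \alpha N}\omega_\alpha\Big(\frac{m}{\alpha}\Big)\,\mu^2(m)\,\mu^2(m+1).
\end{equation*}
I would then use the elementary sawtooth identity for an interval indicator, which in our case reads
\begin{equation*}
\omega_\alpha\Big(\frac{m}{\alpha}\Big)=\frac{1}{\alpha}+\psi\Big(\frac{m}{\alpha}\Big)-\psi\Big(\frac{m+1}{\alpha}\Big),
\end{equation*}
so that $S(N,\alpha)=S_1+S_2$, where $S_1=\tfrac1\alpha\sum_{m\le\alpha N}\mu^2(m)\mu^2(m+1)$ is the main term and $S_2$ collects the two sawtooth contributions.

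The main term is immediate from the classical consecutive square-free count. Applying Mirsky's estimate \eqref{Mirsky} with $\alpha N$ in place of $N$ gives $S_1=\tfrac1\alpha\big(\sigma\,\alpha N+\mathcal{O}((\alpha N)^{2/3}(\log N)^{4/3})\big)=\sigma N+\mathcal{O}(N^{2/3+\varepsilon})$, which already supplies the leading term $\sigma N$ of \eqref{asymptoticformula} with an error well inside the target. Everything then rests on showing $S_2\ll N^{5/6+\varepsilon}$. For this I would approximate $\psi$ by a trigonometric polynomial of length $H$ via Vaaler's lemma, whose nonnegative majorant contributes a term of size $\ll M/H$ (with $M\asymp N$) when summed against $\mu^2(m)\mu^2(m+1)$, and whose principal part reduces $S_2$ to the exponential sums
\begin{equation*}
\sum_{1\le|h|\le H}\frac{1}{|h|}\Bigg|\sum_{m\le \alpha N}\mu^2(m)\,\mu^2(m+1)\,e\Big(\frac{hm}{\alpha}\Big)\Bigg|
\end{equation*}
together with the analogous sum carrying the argument $(m+1)/\alpha$.

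The crux is the estimation of the inner exponential sum. I would open both square-free indicators by $\mu^2(m)=\sum_{d^2\mid m}\mu(d)$ and $\mu^2(m+1)=\sum_{e^2\mid m+1}\mu(e)$ and truncate the divisor variables at a level $z$, the square-full tails $d>z$ or $e>z$ contributing $\ll M/z$ uniformly in $h$. For the coprime pairs $(d,e)$ that survive (coprimality is forced, since $m$ and $m+1$ are coprime), the Chinese Remainder Theorem collapses the two divisibility conditions into a single residue class modulo $d^2e^2$, and the resulting geometric progression is bounded by $\min\big(M/(d^2e^2),\,1/\|hd^2e^2/\alpha\|\big)$. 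Summing over $h$, $d$, $e$ is where the Diophantine hypothesis on $\alpha$ enters: bounded partial quotients makes $\alpha$ badly approximable and an algebraic irrational is of finite type by Roth's theorem, so $\|k/\alpha\|\gg_\varepsilon k^{-1-\varepsilon}$, whence the standard estimate $\sum_{k\le K}\min(U,1/\|k/\alpha\|)\ll (U+K)K^{\varepsilon}$ applies; the number of $(d,e)$ with a given value of $d^2e^2$ is controlled by $A=\max_{1\le m\le N^2}\tau(m)=N^{o(1)}$.

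The main obstacle is precisely this multiple sum: unlike Victorovich's single square-free condition, the joint condition forces a genuine double divisor sum and a Diophantine quantity $\|hd^2e^2/\alpha\|$ of size roughly $z^4$, so the bookkeeping is heavier and must be done uniformly in $h$. I expect the oscillatory total to come out as $\ll z^2 M^{1/2+\varepsilon}$; balancing this against the truncation error $M/z$ (and choosing $H$ large enough that $M/H$ is negligible) gives the optimal choice $z\asymp M^{1/6}\asymp N^{1/6}$, for which both contributions are $\ll N^{5/6+\varepsilon}$. Combining with the main term $S_1$ then yields the asserted asymptotic formula \eqref{asymptoticformula}, the $N^{2/3+\varepsilon}$ error from Mirsky being dominated by the $N^{5/6+\varepsilon}$ coming from $S_2$.
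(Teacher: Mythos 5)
Your overall route coincides with the paper's: detect $[\alpha n]$ by the indicator $\omega_\alpha$, expand $\omega_\alpha=\frac1\alpha+\psi(\cdot)-\psi(\cdot+\frac1\alpha)$, treat the main term by the classical consecutive square-free asymptotic (your use of Mirsky's bound in place of Reuss's is immaterial, since $2/3<5/6$), approximate $\psi$ by Vaaler's polynomial, open both factors $\mu^2$, collapse the two divisibility conditions into a single residue class modulo $d^2e^2$, and estimate the resulting sums of $\min\big(N/(d^2e^2),\,\|hd^2e^2/\alpha\|^{-1}\big)$ using the type-one property of $1/\alpha$.

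The gap is at the quantitative heart of the argument. You truncate the divisor variables \emph{separately} at $d\le z$, $e\le z$ with $z\asymp N^{1/6}$ and assert that the oscillatory part is $\ll z^2N^{1/2+\varepsilon}$; this bound is not substantiated and does not follow from the tool you invoke. The estimate $\sum_{k\le K}\min(U,\|k/\alpha\|^{-1})\ll (U+K)K^\varepsilon$ must be applied with $k=hd^2e^2$, which ranges up to $K=Hz^4\asymp N^{1/2}\cdot N^{2/3}=N^{7/6}$, and the unavoidable ``$+K$'' term (which counts all integers up to $K$, not just the $Hz^2$ triples $(h,d,e)$) then contributes $N^{7/6}$, not $z^2N^{1/2}=N^{5/6}$. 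Applying the estimate instead for each fixed pair $(d,e)$ to the sum over $h$ also fails, because $d^2e^2/\alpha$ need not have a convergent denominator of the right size uniformly in $(d,e)$ (the denominators of $1/\alpha$ can drop by a factor up to $d^2e^2$). With your separate truncation the best these tools yield is $\max\big(N^{1+\varepsilon}/z,\,N^{1/2}z^4\big)$, optimized at $z=N^{1/10}$ to give $N^{9/10+\varepsilon}$ --- enough for infinitude of such pairs, but not for the stated exponent $5/6$. The paper's remedy is to control the \emph{product} of the divisor variables: it splits $d\sim D$, $t\sim T$ dyadically and treats the case $DT\le(\alpha N)^{1/6}$ by noting that $m=kd^2t^2$ then lies in an interval of length $\ll KD^2T^2\le N^{1/2}\cdot N^{1/3}=N^{5/6}$, so Vaughan's lemma with $q\asymp N^{1/2}$ gives $N/q+KD^2T^2+q\ll N^{5/6}$, while the complementary case $DT>(\alpha N)^{1/6}$ is estimated trivially by $\alpha N/(DT)\ll N^{5/6}$. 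Equivalently, you should truncate at $de\le N^{1/6}$ rather than at $d,e\le N^{1/6}$: the tail is then $\sum_{de>N^{1/6}}N/(de)^2\ll N^{5/6+\varepsilon}$ and the modulus $hd^2e^2$ in the main part stays below $N^{5/6}$.
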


\section{Lemmas}
\indent

\begin{lemma}\label{omega}
For the function $\omega_\alpha(x)$ defined by \eqref{omegaalphax} the formula
\begin{equation*}
\omega_\alpha(x)=\frac{1}{\alpha}+\psi(x)-\psi\left(x+\frac{1}{\alpha}\right)
\end{equation*}
holds.
\end{lemma}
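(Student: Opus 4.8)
The plan is to verify the identity directly, since it is nothing more than the elementary sawtooth decomposition of the indicator of an interval, and the whole content lies in a short case analysis over one period. Writing $R(x)=\frac{1}{\alpha}+\psi(x)-\psi\bigl(x+\frac{1}{\alpha}\bigr)$ for the right-hand side, I would first substitute $\psi(t)=\{t\}-1/2$ to reduce $R(x)$ to $\frac{1}{\alpha}+\{x\}-\bigl\{x+\frac{1}{\alpha}\bigr\}$ at every point where neither $x$ nor $x+\frac{1}{\alpha}$ is an integer. At integer arguments I would adopt the balanced convention $\psi(n)=0$; this is the natural choice here because $\omega_\alpha$ takes the averaged value $\frac12$ at its jumps, and it is exactly this convention that makes the two sides agree there. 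Since $\psi$ has period $1$, both $\omega_\alpha$ and $R$ are $1$-periodic, so it suffices to establish the equality for $x\in(0,1]$, and the only dividing point there is $x=1-\frac{1}{\alpha}$, which lies strictly inside $(0,1)$ because $0<\frac{1}{\alpha}<1$.

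Next I would dispose of the two open subintervals. For $0<x<1-\frac{1}{\alpha}$ one has $0<x+\frac{1}{\alpha}<1$, so $\{x+\frac{1}{\alpha}\}=x+\frac{1}{\alpha}$ and $\{x\}=x$, whence $R(x)=\frac{1}{\alpha}+x-(x+\frac{1}{\alpha})=0=\omega_\alpha(x)$. For $1-\frac{1}{\alpha}<x<1$ one has $1<x+\frac{1}{\alpha}<2$, so $\{x+\frac{1}{\alpha}\}=x+\frac{1}{\alpha}-1$ and again $\{x\}=x$, whence $R(x)=\frac{1}{\alpha}+x-(x+\frac{1}{\alpha}-1)=1=\omega_\alpha(x)$. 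This already matches the definition \eqref{omegaalphax} on the bulk of the period.

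It then remains to treat the two boundary points, and this is the only place demanding care. At $x=1-\frac{1}{\alpha}$ the argument $x+\frac{1}{\alpha}=1$ is an integer, so $\psi\bigl(x+\frac{1}{\alpha}\bigr)=0$ while $\psi(x)=(1-\frac{1}{\alpha})-\frac12$, giving $R(x)=\frac{1}{\alpha}+\bigl(\frac12-\frac{1}{\alpha}\bigr)=\frac12$; and at $x=1$ the argument $x$ itself is an integer, so $\psi(x)=0$ while $\psi\bigl(x+\frac{1}{\alpha}\bigr)=\frac{1}{\alpha}-\frac12$, giving $R(x)=\frac{1}{\alpha}-\bigl(\frac{1}{\alpha}-\frac12\bigr)=\frac12$. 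Both agree with the value $\frac12$ assigned to $\omega_\alpha$ at its discontinuities. There is no analytic obstacle in this lemma; the single subtlety — which I would flag explicitly at the outset — is the bookkeeping at these two boundary points, where the half-integer values of $\omega_\alpha$ match $R$ precisely because $\psi$ is taken to vanish (rather than equal $-\frac12$) at integers.
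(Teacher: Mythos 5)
Your verification is correct, and it is genuinely more informative than the paper's treatment: the paper offers no argument at all for this lemma, merely citing Arkhipov--Sadovnichy--Chubarikov, whereas you carry out the elementary case analysis over one period. Your reduction to $x\in(0,1]$ by periodicity, the two open subintervals, and the two boundary points are all handled correctly. The one point worth dwelling on is the convention $\psi(n)=0$ at integers, which you rightly identify as the crux: with the paper's literal definition $\psi(t)=\{t\}-\tfrac12$ and the standard fractional part $\{n\}=0$, one gets $\psi(n)=-\tfrac12$ and the identity would fail at $x=1-\tfrac{1}{\alpha}$ and $x=1$ (giving $1$ and $0$ instead of $\tfrac12$). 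Your balanced convention is exactly what makes the stated formula hold pointwise, and it is the natural one in the Vaaler framework the paper uses later. It would strengthen your write-up to add one sentence observing that the discrepancy is in any case harmless for the application: since $\alpha$ is irrational, the arguments $\tfrac{m}{\alpha}$ and $\tfrac{m+1}{\alpha}$ appearing in \eqref{SNalphaest1} are never integers and $\bigl\{\tfrac{m}{\alpha}\bigr\}$ never equals $1-\tfrac{1}{\alpha}$, so the boundary cases of $\omega_\alpha$ never actually occur in the proof of the Theorem.
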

\begin{proof}
See (\cite{Arkhipov}, p. 480 ).
\end{proof}

\begin{lemma}\label{Vaaler}
For every $J\geq2$, we have
\begin{equation*}
\psi(t)=\sum\limits_{1\leq|k|\leq J}a(k)e(kt)
+\mathcal{O}\Bigg(\sum\limits_{|k|\leq J}b(k)e(kt)\Bigg)\,,\quad
a(k)\ll1/|k|\,,\;\;b(k)\ll1/J\,.
\end{equation*}
\end{lemma}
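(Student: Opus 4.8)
The plan is to prove the lemma by constructing the approximating trigonometric polynomial explicitly and then bounding the error by a single nonnegative trigonometric polynomial of degree $J$ whose Fourier coefficients decay like $1/J$. The natural starting point is the Fourier expansion of the sawtooth function,
\[
\psi(t)=-\frac{1}{\pi}\sum_{k\geq1}\frac{\sin2\pi kt}{k}=\sum_{k\neq0}\frac{-1}{2\pi ik}\,e(kt),
\]
valid at every non-integer $t$. Merely truncating this series at $|k|=J$ does not yield a pointwise error estimate, because of the Gibbs phenomenon near the jumps at the integers; the remedy, following Vaaler, is to replace the truncation by an extremal one-sided approximation.

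First I would invoke the Beurling--Selberg construction to produce a majorant $\psi^{+}$ and a minorant $\psi^{-}$, each a trigonometric polynomial of degree at most $J$, satisfying
\[
\psi^{-}(t)\leq\psi(t)\leq\psi^{+}(t)\quad(t\in\mathbb{R}),\qquad
\int_{0}^{1}\bigl(\psi^{+}(t)-\psi^{-}(t)\bigr)\,dt=\frac{1}{J+1}.
\]
Setting $\psi^{*}(t)=\tfrac12\bigl(\psi^{+}(t)+\psi^{-}(t)\bigr)=\sum_{1\leq|k|\leq J}a(k)e(kt)$ provides the main term, and $E(t):=\tfrac12\bigl(\psi^{+}(t)-\psi^{-}(t)\bigr)$ provides the error, so that $|\psi(t)-\psi^{*}(t)|\leq E(t)$ for every $t$.

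It then remains to read off the two coefficient bounds. For $a(k)$ I would compare Fourier coefficients in $\psi=\psi^{*}+(\psi-\psi^{*})$: for $1\leq|k|\leq J$ this gives $a(k)=\hat\psi(k)-\widehat{(\psi-\psi^{*})}(k)=-\tfrac{1}{2\pi ik}-\widehat{(\psi-\psi^{*})}(k)$, and since $|\psi-\psi^{*}|\leq E$ with $\int_{0}^{1}E=\tfrac{1}{2(J+1)}$, the last term is $O(1/J)$; combined with $1\leq|k|\leq J$ this yields $a(k)\ll1/|k|$. For $b(k)$ I would use that $E$ is a nonnegative trigonometric polynomial of degree $J$, so that writing $E(t)=\sum_{|k|\leq J}b(k)e(kt)$, nonnegativity forces $|b(k)|\leq b(0)=\int_{0}^{1}E=\tfrac{1}{2(J+1)}\ll1/J$. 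Since $\psi$ is odd, $E$ is even and nonnegative, so $\sum_{|k|\leq J}b(k)e(kt)=E(t)\geq0$ and the error is genuinely of the asserted shape.

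I expect the main obstacle to be the first step: the actual construction of the one-sided approximants $\psi^{\pm}$ and the verification both of the sign conditions $\psi^{-}\leq\psi\leq\psi^{+}$ and of the exact value $1/(J+1)$ for the integral of their difference. This is the substantive content of Vaaler's theorem; it rests on the extremal properties of the Beurling function together with Hermite interpolation of $\psi$ at the nodes $j/(J+1)$, rather than on elementary manipulation. Once the sandwich inequality is in place, the passage to the stated form is routine, so I would devote the detailed write-up to the construction of $\psi^{\pm}$ and merely record the coefficient estimates, or else quote the construction from Vaaler's paper and from the monograph of Arkhipov--Chubarikov--Karatsuba cited above.
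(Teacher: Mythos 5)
Your proposal is correct and takes essentially the same route as the paper: the paper's proof is simply the citation of Vaaler, and what you outline is precisely the Beurling--Selberg/Vaaler construction behind that citation, with the bounds $a(k)\ll 1/|k|$ and $b(k)\ll 1/J$ correctly extracted from the sandwich inequality $\psi^{-}\leq\psi\leq\psi^{+}$ and the nonnegativity of $E=\tfrac12(\psi^{+}-\psi^{-})$. Since you ultimately defer the substantive step (the construction of $\psi^{\pm}$ and the value $1/(J+1)$ of $\int_0^1(\psi^{+}-\psi^{-})$) to Vaaler's paper, your write-up and the paper's one-line proof rest on the same source.
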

\begin{proof}
See \cite{Vaaler}.
\end{proof}

\begin{lemma}\label{Trsum1}
If $ X\ge 1 $, then
\begin{equation*}
\Big|\sum_{ n\le X}e(\alpha n)\Big|
\le \min \left(X, \frac{1} { 2||\alpha||} \right) \,.
\end{equation*}
\end{lemma}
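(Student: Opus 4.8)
The plan is to prove the two competing bounds $X$ and $1/(2||\alpha||)$ separately and then take their minimum. The first is immediate: since every summand satisfies $|e(\alpha n)|=1$ and the sum runs over the at most $[X]\le X$ positive integers $n\le X$, the triangle inequality gives $\bigl|\sum_{n\le X}e(\alpha n)\bigr|\le X$. Note also that the irrationality of $\alpha$ forces $||\alpha||\ne 0$, so the quantity $1/(2||\alpha||)$ on the right-hand side is finite and the statement is not vacuous.

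For the second bound I would treat the sum as a finite geometric progression. Writing $M=[X]$ and $z=e(\alpha)$, and noting $z\ne 1$, the standard summation formula yields
\[
\sum_{n=1}^{M}e(\alpha n)=z\,\frac{z^{M}-1}{z-1}\,,
\]
whence, using $|z|=1$ and $|z^{M}-1|\le 2$,
\[
\Bigl|\sum_{n\le X}e(\alpha n)\Bigr|=\frac{|z^{M}-1|}{|z-1|}\le\frac{2}{|e(\alpha)-1|}\,.
\]

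The crux is therefore a lower bound for the denominator $|e(\alpha)-1|$ in terms of $||\alpha||$. Here I would use the identity $|e(\alpha)-1|=|e^{2\pi i\alpha}-1|=2|\sin\pi\alpha|$ together with the observation that $|\sin\pi\alpha|=\sin(\pi||\alpha||)$: the function $\sin\pi t$ is invariant up to sign under $t\mapsto t+1$ and under $t\mapsto -t$, and $||\alpha||\in[0,1/2]$ is precisely the representative realizing this value. The one genuine inequality needed is the elementary convexity estimate $\sin\pi\theta\ge 2\theta$ for $0\le\theta\le 1/2$, which holds because $\sin\pi\theta$ is concave on $[0,1/2]$ and agrees with the linear function $2\theta$ at both endpoints $\theta=0$ and $\theta=1/2$. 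This gives $|e(\alpha)-1|=2\sin(\pi||\alpha||)\ge 4||\alpha||$, and substituting into the display above produces $\bigl|\sum_{n\le X}e(\alpha n)\bigr|\le 2/(4||\alpha||)=1/(2||\alpha||)$.

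Combining the two estimates yields the bound by $\min\bigl(X,\,1/(2||\alpha||)\bigr)$, as claimed. There is no serious obstacle in this lemma; the only point demanding care is pinning down the constant, which rests entirely on the sharp form $\sin\pi\theta\ge 2\theta$ of the sine inequality. A cruder bound there would give $c/||\alpha||$ with $c>1/2$ and would miss the stated estimate, so that sharp endpoint comparison is the single quantitative ingredient worth isolating.
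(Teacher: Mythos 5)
Your proof is correct and is essentially the argument the paper relies on: for this lemma the paper gives no proof of its own but simply cites Karatsuba (Ch.~6, \S 2), where the standard derivation is exactly your geometric-series computation combined with the identity $|e(\alpha)-1|=2\sin(\pi\|\alpha\|)$ and the sharp concavity bound $\sin(\pi\theta)\ge 2\theta$ on $[0,1/2]$. Nothing is missing, and you correctly isolate the one quantitative point (the endpoint-sharp sine inequality) that produces the constant $1/2$.
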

\begin{proof}
See (\cite{Karat}, Ch. 6, \S 2).
\end{proof}

\begin{lemma}\label{Trsum2}
Suppose that $X, Y\geq1$,\, $\lambda=\frac{a}{q}+\frac{\theta}{q^2}$,\, $q\geq1$,\,
$(a, q)=1$,\, $|\theta|\leq1$.
Then
\begin{equation*}
\sum_{n\le X} \, \min \left( Y,\, \frac{1}{ ||\lambda n|| } \right)
\ll\frac{XY}{q}+(X+q)\log 2q\,.
\end{equation*}
\end{lemma}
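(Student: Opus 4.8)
The plan is to prove the bound by the classical device of partitioning the range of summation into blocks of $q$ consecutive integers and exploiting that, because $(a,q)=1$, the numbers $an$ run through a complete residue system modulo $q$ on each block. First I would split $\{1,2,\ldots,[X]\}$ into $M\le X/q+1$ consecutive blocks $B_1,\ldots,B_M$, each consisting of at most $q$ integers; it then suffices to bound the contribution of a single block and multiply by the number of blocks.

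Next, fix a block $B=\{n_0+1,\ldots,n_0+L\}$ with $L\le q$ and write, for $n=n_0+t\in B$,
\begin{equation*}
\lambda n=\frac{an_0}{q}+\frac{\theta n_0}{q^2}+\frac{at}{q}+\frac{\theta t}{q^2}\,.
\end{equation*}
The first two summands are constant on $B$, while the last satisfies $|\theta t/q^2|\le L/q^2\le 1/q$ since $|\theta|\le1$ and $t\le L\le q$. As $t$ runs over $L\le q$ consecutive values the residues $at\bmod q$ are distinct, so the points $at/q\bmod 1$ are distinct elements of $\{0,1/q,\ldots,(q-1)/q\}$ and hence are spaced by at least $1/q$. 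Consequently $\lambda n\bmod1$ is, up to a block-constant shift and a perturbation of size at most $1/q$, a set of $1/q$-separated points. From this I would deduce that for each integer $j\ge1$ the number of $n\in B$ with $\|\lambda n\|\le j/q$ is $O(j)$: a perturbed point lands within $j/q$ of an integer only if its underlying point lies within $(j+1)/q$ of an integer, and a $1/q$-separated set meets such a neighbourhood in $O(j)$ points.

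I would then estimate the inner sum by grouping the terms with $\|\lambda n\|>1/q$ into the $O(\log 2q)$ dyadic ranges $2^{k}/q<\|\lambda n\|\le 2^{k+1}/q$. Each such range contains $O(2^{k})$ values of $n$ by the count above, and each contributes at most $q/2^{k}$ to the sum, so every range contributes $O(q)$ and their total is $O(q\log 2q)$. The remaining $O(1)$ terms with $\|\lambda n\|\le1/q$ I would bound trivially by $Y$, giving
\begin{equation*}
\sum_{n\in B}\min\!\left(Y,\frac{1}{\|\lambda n\|}\right)\ll Y+q\log 2q\,.
\end{equation*}
Summing over the $\le X/q+1$ blocks yields $\dfrac{XY}{q}+(X+q)\log 2q+Y\!\left(\dfrac{X}{q}+1\right)$.

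The whole task thus reduces to absorbing the stray term $Y(X/q+1)$, and \textbf{this is the main obstacle.} When $q\le X$ it is immediate, since then $Y(X/q+1)\ll XY/q$. When $q>X$ there is a single partial block, and here I would argue more carefully rather than invoke the trivial bound $Y$: since $0<t\le L\le X<q$ we have $at\not\equiv0\,(q)$, hence $\|at/q\|\ge1/q$, while the perturbation satisfies $|\theta t/q^2|\le X/q^2$. Therefore $\|\lambda n\|\ge 1/q-X/q^2$, which for $q\ge 2X$ gives $\|\lambda n\|\ge 1/(2q)$ and so $\min(Y,1/\|\lambda n\|)\le 2q$, replacing the stray $Y$ by $O(q)$; and for $X<q<2X$ one has $XY/q\gg Y$ directly. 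In every regime the total is $\ll\dfrac{XY}{q}+(X+q)\log 2q$, which completes the proof.
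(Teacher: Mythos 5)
Your proof is correct. The paper gives no argument of its own here -- it simply cites Lemma 1 of Vaughan's 1977 paper -- and what you have written is precisely the standard proof of that cited result: partition into blocks of length $q$, use $(a,q)=1$ to get $1/q$-spacing of the points $an/q \bmod 1$ up to a perturbation of size $\le 1/q$, count $O(2^k)$ terms in each dyadic range of $\|\lambda n\|$, and then handle the one delicate point, namely removing the stray $Y(X/q+1)$ term, which you do correctly by noting that for $q\ge 2X$ one has $\|\lambda n\|\ge 1/q - X/q^2\ge 1/(2q)$ and for $X<q<2X$ one has $Y\ll XY/q$.
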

\begin{proof}
See (\cite{Vaughan}, Lemma 1).
\end{proof}

\section{Proof of the Theorem}
\indent

The equality $m = [\alpha n]$ is tantamount to $\alpha n-1 < m < \alpha n$,\,
$\frac{m}{\alpha}< n <\frac{m}{\alpha}+\frac{1}{\alpha}$,
i.e. $\left\{\frac{m}{\alpha}\right\}>1-\frac{1}{\alpha}$.
Then from  \eqref{SNalpha} we get
\begin{align}\label{SNalphaest1}
S(N, \alpha)&=\sum\limits_{n\leq N}\mu^2([\alpha n])\mu^2([\alpha n]+1)
=\sum\limits_{m\leq\alpha N\atop{\left\{\frac{m}{\alpha}\right\}>1-\frac{1}{\alpha}}}\mu^2(m)\mu^2(m+1)\nonumber\\
&=\sum\limits_{m\leq\alpha N}\mu^2(m)\mu^2(m+1)\omega_\alpha\left(\frac{m}{\alpha}\right)\,.
\end{align}
Now  \eqref{SNalphaest1} and Lemma \ref{omega} give us
\begin{align}\label{SNalphaest2}
S(N, \alpha)&=\frac{1}{\alpha}\sum\limits_{m\leq\alpha N}\mu^2(m)\mu^2(m+1)
+\sum\limits_{m\leq\alpha N}\mu^2(m)\mu^2(m+1)\left[\psi\left(\frac{m}{\alpha}\right)-\psi\left(\frac{m+1}{\alpha}\right)\right]\nonumber\\
&=\frac{1}{\alpha}S_1(N, \alpha)+S_2(N, \alpha)\,,
\end{align}
where
\begin{align}
\label{S1}
S_1(N, \alpha)&=\sum\limits_{m\leq\alpha N}\mu^2(m)\mu^2(m+1)\,,\\
\label{S2}
S_2(N, \alpha)&=\sum\limits_{m\leq\alpha N}\mu^2(m)\mu^2(m+1)\left[\psi\left(\frac{m}{\alpha}\right)-\psi\left(\frac{m+1}{\alpha}\right)\right]\,.
\end{align}
\textbf{Estimation of} $\mathbf{S_1(N, \alpha)}$

Bearing in mind \eqref{Reuss} and \eqref{S1} we obtain
\begin{equation}\label{S1est}
S_1(N, \alpha)=\sigma\alpha N+\mathcal{O}\big(N^{(26+\sqrt{433})/81+\varepsilon}\big)\,,
\end{equation}
where $\sigma$ is denoted by \eqref{sigma}.\\
\textbf{Estimation of} $\mathbf{S_2(N, \alpha)}$

Let
\begin{equation}\label{J}
J=\sqrt{\alpha N}\,.
\end{equation}
From \eqref{S2} and Lemma \ref{Vaaler} it follows
\begin{align}\label{S2est1}
S_2(N, \alpha)&=\sum\limits_{m\leq\alpha N}\mu^2(m)\mu^2(m+1)
\sum\limits_{1\leq|k|\leq J}a(k)e\left(\frac{km}{\alpha}\right)
\left[1-e\left(\frac{k}{\alpha}\right)\right]\nonumber\\
&+\mathcal{O}\Bigg(\sum\limits_{m\leq\alpha N}\mu^2(m)\mu^2(m+1)
\sum\limits_{|k|\leq J}b(k)e\left(\frac{km}{\alpha}\right)
\left[1+e\left(\frac{k}{\alpha}\right)\right]\Bigg)\nonumber\\
&=\sum\limits_{1\leq|k|\leq J}a(k)\left[1-e\left(\frac{k}{\alpha}\right)\right]
\sum\limits_{m\leq\alpha N}\mu^2(m)\mu^2(m+1)e\left(\frac{km}{\alpha}\right)\nonumber\\
&+\mathcal{O}\Bigg(\sum\limits_{|k|\leq J}b(k)
\left[1+e\left(\frac{k}{\alpha}\right)\right]
\sum\limits_{m\leq\alpha N}\mu^2(m)\mu^2(m+1)
e\left(\frac{km}{\alpha}\right)\Bigg)\nonumber\\
&=S_3(N, \alpha)+\mathcal{O}\Big(S_4(N, \alpha)\Big)\,,
\end{align}
where
\begin{align}
\label{S3}
S_3(N, \alpha)&=\sum\limits_{1\leq|k|\leq J}a(k)\left[1-e\left(\frac{k}{\alpha}\right)\right]
\sum\limits_{m\leq\alpha N}\mu^2(m)\mu^2(m+1)e\left(\frac{km}{\alpha}\right)\,,\\
\label{S4}
S_4(N, \alpha)&=\sum\limits_{|k|\leq J}b(k)\left[1+e\left(\frac{k}{\alpha}\right)\right]
\sum\limits_{m\leq\alpha N}\mu^2(m)\mu^2(m+1)e\left(\frac{km}{\alpha}\right)\,.
\end{align}
Using \eqref{S3} and Lemma \ref{Vaaler} we find
\begin{equation}\label{S3est1}
S_3(N, \alpha)\ll\sum\limits_{1\leq|k|\leq J}\frac{1}{k}
\left|\sum\limits_{m\leq\alpha N}\mu^2(m)\mu^2(m+1)e\left(\frac{km}{\alpha}\right)\right|\,.
\end{equation}
From \eqref{J}, \eqref{S4} and Lemma \ref{Vaaler} we get
\begin{equation}\label{S4est1}
S_4(N, \alpha)\ll\frac{1}{J}\sum\limits_{1\leq|k|\leq J}
\left|\sum\limits_{m\leq\alpha N}\mu^2(m)\mu^2(m+1)e\left(\frac{km}{\alpha}\right)\right|+\sqrt{N}\,.
\end{equation}
In order to estimate the sums $S_3(N, \alpha)$ and $S_4(N, \alpha)$ we shall prove the following lemma.
\begin{lemma}\label{Mainlemma}
Let $\alpha>1$ be irrational number with bounded partial quotient or irrational algebraic number.
Then for the sum
\begin{equation}\label{Sigma}
\Sigma=\sum\limits_{1\leq k\leq J}\frac{1}{k}
\left|\sum\limits_{m\leq\alpha N}\mu^2(m)\mu^2(m+1)e(\lambda km)\right|\,.
\end{equation}
where $\lambda=\frac{1}{\alpha}$\,, the estimation
\begin{equation*}
\Sigma\ll N^{\frac{5}{6}+\varepsilon}\,,
\end{equation*}
holds.
\end{lemma}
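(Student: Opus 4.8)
The plan is to detect the two square-free conditions by the identity $\mu^2(n)=\sum_{d^2\mid n}\mu(d)$ applied to both $m$ and $m+1$, and then to exploit that $m$ and $m+1$ are coprime. Writing $\mu^2(m)=\sum_{d_1^2\mid m}\mu(d_1)$ and $\mu^2(m+1)=\sum_{d_2^2\mid(m+1)}\mu(d_2)$, every surviving pair automatically satisfies $(d_1,d_2)=1$, hence $(d_1^2,d_2^2)=1$. I would first truncate each divisor sum at a parameter $z$ (ultimately $z\asymp N^{1/6}$), estimating the tails $d_i>z$ trivially: the number of $m\le\alpha N$ with $d^2\mid m$ is $\ll\alpha N/d^2+1$, so the contribution of $d_i>z$ is $\ll\alpha N/z+\sqrt{N}$ per inner $m$-sum. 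Carried through the weight $1/k$ and summed over $k\le J$, this yields an admissible error $\ll N^{5/6+\varepsilon}$.

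For the main term, with $d_1,d_2\le z$ and $(d_1,d_2)=1$, I would use the Chinese Remainder Theorem to rewrite the condition $d_1^2\mid m,\ d_2^2\mid(m+1)$ as a single congruence $m\equiv m_0\pmod{(d_1d_2)^2}$, so the inner sum over $m$ becomes a geometric progression with ratio $e\bigl(\lambda k(d_1d_2)^2\bigr)$; Lemma \ref{Trsum1} bounds it by $\min\bigl(\alpha N/(d_1d_2)^2,\ 1/(2\|\lambda k(d_1d_2)^2\|)\bigr)$. The decisive step is to interchange the order of summation, putting the sum over $k$ innermost, so that the frequency $\lambda(d_1d_2)^2$ multiplies the linear variable $k$; this is precisely what makes Lemma \ref{Trsum2} applicable. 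Setting $u=d_1d_2$ and collecting the $\ll u^\varepsilon$ pairs with a given product, it then suffices to bound, uniformly in $u\le z^2$, the quantity $F(u)=\sum_{k\le J}\frac1k\min\bigl(\alpha N/u^2,\ 1/\|\lambda u^2 k\|\bigr)$.

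To estimate $F(u)$ I would decompose the range of $k$ dyadically and apply Lemma \ref{Trsum2} on each block with frequency $\beta=\lambda u^2$, using a Dirichlet approximation $\beta=a/q+\theta/q^2$ at scale $Q=J$, so that $1\le q\le J$ and $|\theta|\le1$. Here the Diophantine hypothesis on $\alpha$ enters crucially: since $\alpha$, hence $\lambda=1/\alpha$, has bounded partial quotients, one has $\|\lambda n\|\gg 1/n$ for all $n\ge1$ (for algebraic irrational $\alpha$ the weaker bound $\|\lambda n\|\gg_\varepsilon n^{-1-\varepsilon}$ from Roth's theorem suffices). Applying this with $n=u^2q$ together with $\|q\beta\|\le1/J$ forces the denominator to be large, $q\gg J/u^2$. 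Consequently the major-arc term of Lemma \ref{Trsum2} satisfies $\frac{\alpha N/u^2}{q}\ll\sqrt{N}$, while the complementary term is $\ll q\ll\sqrt{N}$; after the dyadic summation, in which the $1/k$ weight is essential to reduce the major-arc contribution from size $\alpha N$ to $\ll\sqrt{N}$ (up to logarithms) and keeps the $q/K$ terms summable, this yields $F(u)\ll N^{1/2+\varepsilon}$ uniformly in $u$.

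Assembling the pieces, the main term is $\ll\sum_{u\le z^2}u^\varepsilon F(u)\ll z^2 N^{1/2+\varepsilon}$, which together with the truncation error $\ll N^{1+\varepsilon}/z$ is balanced by the choice $z\asymp N^{1/6}$, both contributions then being $\ll N^{5/6+\varepsilon}$. The main obstacle is exactly the quadratic argument $\lambda k(d_1d_2)^2$: being quadratic in the divisor variable, it prevents a direct application of Lemma \ref{Trsum2} in the $d_i$, so reversing the summation to make it linear in $k$ is forced, and the genuine gain over a naive pointwise use of $\|\lambda u^2 k\|\gg 1/(u^2k)$ — which alone would only deliver the weaker exponent $7/8$ — comes entirely from the bounded-partial-quotient property, which guarantees that $\lambda u^2$ has no good rational approximation with small denominator and thereby tames the otherwise dangerous diagonal contribution.
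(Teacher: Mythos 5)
Your argument is correct and reaches the stated bound $N^{5/6+\varepsilon}$, but the decisive step is organized differently from the paper's. The opening is common to both: apply $\mu^2(n)=\sum_{d^2\mid n}\mu(d)$ twice, use $(d_1,d_2)=1$ to reduce the inner $m$-sum to a geometric progression with ratio $e(\lambda k(d_1d_2)^2)$, and bound it via Lemma \ref{Trsum1}; your truncation at $z\asymp N^{1/6}$ with a trivial tail estimate plays exactly the role of the paper's Case~2 (the range $DT>(\alpha N)^{1/6}$, handled trivially). The divergence is in how Lemma \ref{Trsum2} is deployed. The paper fuses all three variables into the single linear variable $m=kd^2t^2$, pays a multiplicity penalty $\tau^2(m)\ll N^{\varepsilon}$, and applies Lemma \ref{Trsum2} once to the $m$-sum using a rational approximation to $\lambda$ itself with denominator $q\asymp(\alpha N)^{1/2}$ --- the Diophantine hypothesis enters through the existence of a convergent denominator in the window $[(\alpha N)^{1/2-\varepsilon},(\alpha N)^{1/2}]$. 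You instead keep $k$ as the linear variable, fix $u=d_1d_2$, and apply Lemma \ref{Trsum2} to the $k$-sum with frequency $\lambda u^2$, invoking the hypothesis in the dual form $\|\lambda n\|\gg n^{-1-\varepsilon}$ to force the denominator at scale $J$ to satisfy $q\gg J/u^2$, which caps the main term at $\min(\alpha N/u^2,\alpha N/J)\ll\sqrt{\alpha N}$ uniformly in $u\le z^2$ and gives $F(u)\ll N^{1/2+\varepsilon}$; summing over $u\le z^2=N^{1/3}$ recovers $N^{5/6+\varepsilon}$. Your route avoids the divisor-function bookkeeping and makes the Diophantine input fully explicit, at the cost of approximating a different frequency $\lambda u^2$ for each modulus $u$; the paper's route needs only one approximation to $\lambda$ but must quote the gap property of its continued-fraction denominators. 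Both balance at the same exponent, and neither improves on the other.
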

\begin{proof}
Using \eqref{Sigma} and the well-known identity $\mu^2(m)=\sum_{d^2|m}\mu(d)$ we write
\begin{align*}
\Sigma&=\sum\limits_{1\leq k\leq J}\frac{1}{k}
\left|\sum\limits_{m\leq\alpha N}\left(\sum\limits_{d^2|m}\mu(d)\right)\left(\sum\limits_{t^2|m+1}\mu(t)\right)
e(\lambda km)\right|\\
&=\sum\limits_{1\leq k\leq J}\frac{1}{k}
\left|\sum\limits_{d\leq\sqrt{\alpha N}}\mu(d)
\sum\limits_{t\leq\sqrt{\alpha N+1}}\mu(t)
\sum\limits_{r\leq\frac{\alpha N}{d^2}\atop{rd^2+1\equiv0\,(t^2)}}e(\lambda krd^2)\right|\\
&\leq\sum\limits_{1\leq k\leq J}\frac{1}{k}
\sum\limits_{d\leq\sqrt{\alpha N}}\sum\limits_{t\leq\sqrt{\alpha N+1}}
\left|\sum\limits_{r\leq\frac{\alpha N}{d^2}\atop{rd^2+1\equiv0\,(t^2)}}e(\lambda krd^2)\right|\,.
\end{align*}
Splitting the range of $k$, $d$ and $t$ into dyadic subintervals we obtain
\begin{equation}\label{Sigmaest1}
\Sigma\ll(\log N)^3\max\limits_{1\leq K\leq J/2\atop{1\leq D\leq\sqrt{\alpha N}/2\atop{1\leq T\leq\sqrt{\alpha N+1}/2}}}
\Sigma_0(K, D, T)\,,
\end{equation}
where
\begin{equation}\label{Sigma0}
\Sigma_0(K, D, T)=\sum\limits_{K\leq k\leq 2K}\frac{1}{k}
\sum\limits_{D\leq d\leq2D}\sum\limits_{T\leq t\leq2T}
\left|\sum\limits_{r\leq\frac{\alpha N}{d^2}\atop{rd^2+1\equiv0\,(t^2)}}e(\lambda krd^2)\right|\,.
\end{equation}
If $(d,\,t)>1$ then the sum $\Sigma_0(K, D, T)$ is empty. Suppose now that $(d,\,t)=1$.
Then the congruence $rd^2+1\equiv0\,(t^2)$ is equivalent to $r\equiv r_0\,(t^2)$, where $r_0$ is some integer with
$1\leq r_0\leq t^2$. From the last consideration and \eqref{Sigma0} it follows
\begin{equation}\label{Sigma0est1}
\Sigma_0(K, D, T)\leq\sum\limits_{K\leq k\leq 2K}\frac{1}{k}
\sum\limits_{D\leq d\leq2D}\sum\limits_{T\leq t\leq2T}
\left|\sum\limits_{s\leq\frac{\alpha N-r_0 d^2}{d^2 t^2}}e(\lambda k s d^2 t^2)\right|\,.
\end{equation}
Consider two cases.

\textbf{Case 1.}\, $DT\leq{(\alpha N)}^{1/6}$.

The inequality \eqref{Sigma0est1} and Lemma \ref{Trsum1} give us
\begin{align}\label{Sigma0est2}
\Sigma_0(K, D, T)&\ll\sum\limits_{K\leq k\leq 2K}\frac{1}{k}\sum\limits_{D\leq d\leq2D}\sum\limits_{T\leq t\leq2T}
\min\left(\frac{\alpha N}{d^2 t^2}, \frac{1}{||\lambda k d^2 t^2||} \right) \nonumber\\
&\leq\sum\limits_{K\leq k\leq 2K}\sum\limits_{D\leq d\leq2D}\sum\limits_{T\leq t\leq2T}
\min\left(\frac{\alpha N}{k d^2 t^2}, \frac{1}{||\lambda k d^2 t^2||} \right) \nonumber\\
&\leq\sum\limits_{K\leq k\leq 2K}\sum\limits_{D\leq d\leq2D}\sum\limits_{T\leq t\leq2T}
\min\left(\frac{\alpha N}{K D^2 T^2}, \frac{1}{||\lambda k d^2 t^2||} \right)\,.
\end{align}
Replacing $m=k d^2 t^2$ from \eqref{Sigma0est2} we get
\begin{align}\label{Sigma0est3}
\Sigma_0(K, D, T)&\leq\sum\limits_{K D^2 T^2\leq m\leq 32 K D^2 T^2}\Bigg(\sum\limits_{d^2 | m\atop{ d\leq2D}}1\Bigg)
\Bigg(\sum\limits_{t^2 | m\atop{ t\leq2T}}1\Bigg)
\min\left(\frac{\alpha N}{K D^2 T^2}, \frac{1}{||\lambda m||}\right) \nonumber\\
&\leq\sum\limits_{K D^2 T^2\leq m\leq 32 K D^2 T^2}\tau^2(m)
\min\left(\frac{\alpha N}{K D^2 T^2}, \frac{1}{||\lambda m||}\right) \nonumber\\
&\ll N^\varepsilon\sum\limits_{K D^2 T^2\leq m\leq 32 K D^2 T^2}
\min\left(\frac{\alpha N}{K D^2 T^2}, \frac{1}{||\lambda m||}\right)\,.
\end{align}
Since $\alpha$ \big(therefore $\lambda=\frac{1}{\alpha}$\big)
is irrational number with bounded partial quotient or irrational algebraic number
then $\lambda$ can be represented in the form
$\lambda=\frac{a}{q}+\frac{\theta}{q^2}$, $(\alpha N)^{\frac{1}{2}-\varepsilon}\ll q\ll(\alpha N)^{\frac{1}{2}}$,
$(a, q)=1$, $|\theta|\leq1$.
This follows for example from (\cite{Victorovich}, Ch.2, Lemma 1.5, Lemma 1.6).
Bearing in mind these considerations, \eqref{J}, \eqref{Sigma0est3}, Lemma \ref{Trsum2},
the inequalities $DT\leq{(\alpha N)}^{1/6}$ and $K\leq J$ we find
\begin{equation}\label{Sigma0est4}
\Sigma_0(K, D, T)\ll N^\varepsilon\left(\frac{\alpha N}{q}+K D^2 T^2+q\right)\log N\ll N^{\frac{5}{6}+\varepsilon}\,.
\end{equation}
\newpage

\textbf{Case 2.}\, $DT>{(\alpha N)}^{1/6}$.

Using \eqref{J}, \eqref{Sigma0est1}, the trivial estimate,
the inequalities $DT>{(\alpha N)}^{1/6}$ and $K\leq J$ we obtain
\begin{equation}
\label{Sigma0est5}
\Sigma_0(K, D, T)\leq\sum\limits_{K\leq k\leq 2K}\frac{1}{k}
\sum\limits_{D\leq d\leq2D}\sum\limits_{T\leq t\leq2T}\frac{\alpha N}{d^2 t^2}
\ll\frac{\alpha N}{DT}\log K\ll N^{\frac{5}{6}+\varepsilon}\,.
\end{equation}
From \eqref{Sigmaest1}, \eqref{Sigma0est4} and \eqref{Sigma0est5} it follows
\begin{equation*}
\Sigma\ll N^{\frac{5}{6}+\varepsilon}\,.
\end{equation*}
The lemma is proved.
\end{proof}
On the one hand \eqref{S3est1} and Lemma \ref{Mainlemma} give us
\begin{equation}\label{S3est2}
S_3(N, \alpha)\ll N^{\frac{5}{6}+\varepsilon}\,.
\end{equation}
On the other hand \eqref{S4est1} and Lemma \ref{Mainlemma} imply
\begin{equation}\label{S4est2}
S_4(N, \alpha)\ll\sum\limits_{1\leq|k|\leq J}\frac{1}{k}
\left|\sum\limits_{m\leq\alpha N}\mu^2(m)\mu^2(m+1)e\left(\frac{km}{\alpha}\right)\right|+\sqrt{N}
\ll N^{\frac{5}{6}+\varepsilon}\,.
\end{equation}
By \eqref{S2est1}, \eqref{S3est2} and \eqref{S4est2} we find
\begin{equation}\label{S2est2}
S_2(N, \alpha)\ll N^{\frac{5}{6}+\varepsilon}\,.
\end{equation}

\textbf{The end of the proof}

Bearing in mind \eqref{SNalphaest2}, \eqref{S1est} and  \eqref{S2est2}
we obtain the asymptotic formula \eqref{asymptoticformula}.

The theorem is proved.
\vspace{5mm}

\textbf{Acknowledgments.}
The author thanks Professor Stephen Choi for his helpful comments and suggestions,
that led to improvement of the reminder term in the asymptotic formula \eqref{asymptoticformula}.

\end{document}